\newtheorem{theorem}{Theorem}[section]
\newtheorem{lemma}[theorem]{Lemma}
\newtheorem{proposition}[theorem]{Proposition}
\theoremstyle{definition}
\numberwithin{equation}{section}
\newcommand\lm{\lambda}
\newcommand\Lm{\Lambda}
\newcommand{\rd}{{\,\rm d}}
\newcommand{\e}{{\rm e}}
\newcommand{\N}{{\mathbb N}}
\newcommand{\R}{{\mathbb R}}
\newcommand{\C}{{\mathbb C}}
\newcommand\beq{\begin{equation}}
\newcommand\eeq{\end{equation}}
\newcommand{\dist}{\mathrm{dist}}
\newcommand\re{\mathrm{Re}}
\newcommand\im{\mathrm{Im}}
\newcommand\I{\mathrm{i}}
\newcommand{\beqnt}{\begin{equation*}}
\newcommand{\eeqnt}{\end{equation*}}
\newcommand{\set}[2]{\{#1 : #2 \}}
\newcommand{\sgn}{\operatorname{sgn}}
\begin{document}

\title[Landau Hamiltonian with $L^p$ potentials]{Sharp spectral estimates for the perturbed Landau Hamiltonian with $L^p$ potentials}

\author{Jean-Claude Cuenin}
\address{Mathematisches Institut, Ludwig-Maximilians-Universit\"at M\"unchen, 80333 Munich, Germany}
\email{cuenin@math.lmu.de}

\begin{abstract}
We establish a sharp estimate on the size of the spectral clusters of the Landau Hamiltonian with $L^p$ potentials in two dimensions as the cluster index tends to infinity. In three dimensions, we prove a new limiting absorption principle as well as 
a unique continuation theorem. The results generalize to higher dimensions.  
\end{abstract}

\maketitle

\section{Introduction}
Consider the Hamiltonian $H_{0,\perp}$ of a particle confined to a two-dimensional surface in a constant magnetic field $B=(0,0,1)$, perpendicular to the surface. In the symmetric gauge, the so-called Landau Hamiltonian is given by
\beq\label{eq. unperturbed Landau Hamiltonian 2D}
H_{0,\perp}=\left(-\I\frac{\partial}{\partial x}+\frac{y}{2}\right)^2+\left(-\I\frac{\partial}{\partial y}-\frac{x}{2}\right)^2,\quad (x,y)\in\R^2.
\eeq
The spectrum of $H_{0,\perp}$ is purely discrete, 
\[
\sigma(H_{0,\perp})=\set{\lm_k=2k+1}{k\in\N_0},
\]
and each eigenvalue $\lm_k$ (called the $k$-th Landau level) is of infinite multiplicity. We are interested in the spectrum of the perturbed Landau Hamiltonian
\beq\label{eq. perturbed Landau Hamiltonian}
H=H_{0,\perp}+V
\eeq
where $V$ is a (complex-valued) potential. In particular, we derive sharp bounds on the location of the spectrum of $H$ lying in the $k_0$-th spectral cluster
\begin{align}\label{def. Lambdak0}
\Lm_{k_0}=\set{z\in\C}{|\lm_{k_0}-\re z|\leq 1}
\end{align}
as $k_0\to\infty$. If $V\in L^{\infty}(\R^2)$, then by standard perturbation theory
\[
\sigma(H)\subset \set{z\in\C}{\min_{k\in\N_0}|\lm_k-z|\leq \|V\|_{\infty}}.
\]
On the other hand, if $V$ is a real-valued smooth function of compact support, then it was shown in \cite{MR2097614} that
\begin{align}\label{eq. cluster asymptotics literature}
\sigma(H)\cap\Lambda_{k_0}\subset [\lm_{k_0}-C\lm_{k_0}^{-1/2},\lm_{k_0}+C\lm_{k_0}^{-1/2}]
\end{align}
for $k_0$ sufficiently large. Here, $C>0$ is a constant depending on $\|V\|_{\infty}$ and on the diameter of the support of $V$. This result was also proven in \cite{MR3053767} by different techniques and for more general potentials, namely for continuous $V$ such that
\begin{align*}
\|V\|_{X_{\rho}}:=\sup_{x\in\R^2}(1+x^2)^{\rho/2}|V(x)|<\infty
\end{align*}
for some $\rho>1.$ The $O(\lm_{k_0}^{-1/2})$-decay for the size of the $k_0$-th cluster is sharp, i.e.\ the eigenvalue clusters have size $\geq c\lm_{k_0}^{-1/2}$ for some $c>0$ unless $V\equiv 0$. Moreover, the constant $C$ in \eqref{eq. cluster asymptotics literature} depends only on $\|V\|_{X_{\rho}}$. For $\rho<1$, the optimal decay rate of the spectral cluster size is $\mathcal{O}(\lm_{k_0}^{-\rho/2})$ \cite{MR3232579}. 

The aim of this note is twofold: First we obtain estimates on the size of the spectral clusters that depend only on an $L^p$-norm of $V$.
The result (Theorem \ref{thm spectral clusters}) holds more generally in all even dimensions. Second, when the dimension is odd, we prove an $L^p$ limiting absorption principle (Theorem~\ref{LAP small perturbation}) and a unique continuation theorem (Theorem \ref{thm unique continuation}).

We will use the standard notation $a\lesssim b$ if there exists a non-negative constant $C$ such that $a\leq Cb$. If we want to emphasize the dependence of the constant on some parameter $J$, we write $a\lesssim_J b$. If $a\lesssim b\lesssim a$, we write $a\approx b$. 

\section{Spectral cluster estimates in even dimensions}
In $d=2n$ dimensions, we consider the generalization of \eqref{eq. unperturbed Landau Hamiltonian even d}, i.e.
\beq\label{eq. unperturbed Landau Hamiltonian even d}
H_{0,\perp}=\sum_{j=1}^n\left(-\I\frac{\partial}{\partial x_j}+\frac{y_j}{2}\right)^2+\left(-\I\frac{\partial}{\partial y_j}-\frac{x_j}{2}\right)^2,\quad (x,y)\in\R^{2n}.
\eeq
Its eigenvalues are given by $\lambda_k:=2k+n$, $k\in\N$.

Assume that $V\in L^{r}(\R^d)$, with $r\in [d/2,\infty]$, is a (possibly complex-valued) potential.
The perturbed operator (defined in the sense of sectorial forms) then satisfies the spectral estimate \cite[Theorem 5.1]{2015arXiv151000066C}
\begin{align*}
\sigma(H)\subset\left\{z\in\C:| \im z|^{1-\frac{d}{2r}}\lesssim 1+ \|V\|_{L^r(\R^d)}\right\}.
\end{align*}
In particular, if $r>d/2$, this implies that $|\im z|\lesssim 1$ for all $z\in\sigma(H)$.
Our main result is the following refinement.

\begin{theorem}[Shrinking spectral clusters]\label{thm spectral clusters} 
Let $d\in 2\N$, and let $V\in L^r(\R^d)$ with $r\in [d/2,\infty]$. Then there exists $K\in\N$ such that for all $k_0\geq K$ we have 
\begin{align}\label{shrinking spectral cluster estimate}
\sigma(H)\cap\Lambda_{k_0}\subset\set{z\in\C}{\delta(z)\lesssim \|V\|_{L^r(\R^d)}\lambda_{k_0}^{\nu(r)}},
\end{align}
where $\delta(z):=\dist(z,\sigma(H_{0,\perp}))$ and
\begin{align}\label{def. nu(r)}
\nu(r):=\begin{cases}
\frac{d}{2r}-1,\quad &\frac{d}{2}\leq r\leq \frac{d+1}{2},\\
-\frac{1}{2r}\quad & \frac{d+1}{2}\leq r\leq\infty.
\end{cases}
\end{align}
Moreover, the estimate is sharp in the following sense: For every $k_0\geq K$ there exists $V\in L^r(\R^d)$, real-valued and $V\leq 0$, such that
\begin{align}\label{shrinking spectral cluster estimate sharpness}
\sigma(H)\cap\set{z\in\R}{|z-\lambda_{k_0}|\approx \|V\|_{L^r(\R^d)}\lambda_{k_0}^{\nu(r)}}\neq \emptyset.
\end{align} 
\end{theorem}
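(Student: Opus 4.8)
The plan is to reduce the spectral estimate to a resolvent bound and an associated Birman–Schwinger principle, exactly as in the general spectral-estimate result quoted above from \cite{2015arXiv151000066C}, but carried out \emph{locally} near the $k_0$-th Landau level rather than globally. Write $V = |V|^{1/2}\sgn(V)|V|^{1/2} =: B A$ with $A = |V|^{1/2}$, $B=\sgn(V)|V|^{1/2}$, both in $L^{2r}(\R^d)$. For $z\notin\sigma(H_{0,\perp})$, the number $z$ is an eigenvalue of $H=H_{0,\perp}+V$ iff $-1$ is an eigenvalue of the Birman–Schwinger operator $A(H_{0,\perp}-z)^{-1}B$; hence $\sigma(H)\cap\Lambda_{k_0}$ is contained in the set of $z$ with $\|A(H_{0,\perp}-z)^{-1}B\|\ge 1$. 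By the spectral decomposition $H_{0,\perp}=\sum_k \lambda_k \Pi_k$ with $\Pi_k$ the Landau projections,
\begin{align*}
A(H_{0,\perp}-z)^{-1}B = \sum_{k\in\N_0}\frac{A\Pi_k B}{\lambda_k-z}.
\end{align*}
For $z\in\Lambda_{k_0}$ the term $k=k_0$ has the small denominator $\lambda_{k_0}-z$ with $|\lambda_{k_0}-\re z|\le 1$, and the remaining terms have denominators $\gtrsim 1 + |\lambda_k-\lambda_{k_0}|$ which should be summable after one inserts the key projection bound.

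The crucial analytic input is a uniform bound on $\|A\Pi_k B\|$ (operator norm, in fact we will want the stronger Schatten-norm control that the cited paper uses) in terms of $\|V\|_{L^r}$ and a power of $\lambda_k$. This is the analogue of the classical spectral-cluster / restriction estimate for the sphere or for Hermite expansions: one needs
\begin{align*}
\|\Pi_k f\|_{L^{p'}(\R^d)} \lesssim \lambda_k^{\theta(r)}\,\|f\|_{L^{p}(\R^d)},\qquad \tfrac1p+\tfrac1{p'}=1,\ p = (2r)',
\end{align*}
with the sharp exponent $\theta(r)$ exhibiting the two-regime behaviour $[d/2,(d+1)/2]$ versus $[(d+1)/2,\infty]$ that produces $\nu(r)$ in \eqref{def. nu(r)} (the breakpoint $r=(d+1)/2$ is the usual Stein–Tomas exponent, reflecting whether the $L^p\!\to\!L^2$ mapping of $\Pi_k$ is governed by the "Knapp" example or by the full oscillatory decay). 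Granting such a bound, $\|A\Pi_k B\| = \|\,|V|^{1/2}\Pi_k |V|^{1/2}\| \lesssim \|V\|_{L^r}\lambda_k^{2\theta(r)}$ with $2\theta(r)=\nu(r)$, and then
\begin{align*}
\|A(H_{0,\perp}-z)^{-1}B\|\le \frac{\|A\Pi_{k_0}B\|}{\delta(z)} + \sum_{k\neq k_0}\frac{\|A\Pi_kB\|}{|\lambda_k-z|}\lesssim \frac{\|V\|_{L^r}\lambda_{k_0}^{\nu(r)}}{\delta(z)} + \|V\|_{L^r}\,R_{k_0},
\end{align*}
where $R_{k_0}=\sum_{k\neq k_0}\lambda_k^{\nu(r)}/|\lambda_k-z|$. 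Since $\nu(r)<0$ for $r>d/2$ (and $\nu(d/2)=0$, where a logarithmic loss may be tolerated or $R_{k_0}$ handled by splitting near/far from $k_0$), $R_{k_0}\to 0$ as $k_0\to\infty$; so there is $K$ such that for $k_0\ge K$ the far-away contribution is $\le\frac12$, and the condition $\|A(H_{0,\perp}-z)^{-1}B\|\ge 1$ forces $\|V\|_{L^r}\lambda_{k_0}^{\nu(r)}/\delta(z)\gtrsim 1$, i.e.\ $\delta(z)\lesssim\|V\|_{L^r}\lambda_{k_0}^{\nu(r)}$, which is \eqref{shrinking spectral cluster estimate}.

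For the sharpness claim \eqref{shrinking spectral cluster estimate sharpness} I would run the Birman–Schwinger principle in reverse with a near-extremizer of the projection bound. Choose $f_{k_0}$ normalized in $L^{p}$ that (nearly) saturates $\|\Pi_{k_0}f\|_{p'}\approx \lambda_{k_0}^{\theta(r)}\|f\|_p$ — for $r\le(d+1)/2$ a Knapp-type wave packet adapted to the scale of the $k_0$-th Landau orbit, for $r>(d+1)/2$ a radial profile — and set $V=-c\,|f_{k_0}|^{p-2}$ (so $V\le 0$, and $V\in L^r$ with $\|V\|_{L^r}$ computable, $c>0$ a parameter). Then $A\Pi_{k_0}B$, restricted to the one-dimensional span of $A^{-1}f_{k_0}$-type vectors, has an eigenvalue of size $\approx c\,\|V\|_{L^r}\lambda_{k_0}^{\nu(r)}$ up to harmless constants; truncating the Birman–Schwinger series at $k=k_0$ and using that the tail is $o(1)$ by the same computation as above, one finds a real $z$ with $|z-\lambda_{k_0}|\approx c\,\|V\|_{L^r}\lambda_{k_0}^{\nu(r)}$ in $\sigma(H)$ (negativity of $V$ keeps things on the real axis and makes the rank-one Birman–Schwinger analysis clean via monotonicity). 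Adjusting $c$ gives the stated two-sided comparison.

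The main obstacle is the sharp projection bound $\|\Pi_k\|_{L^p\to L^{p'}}\approx \lambda_k^{\theta(r)}$ with the correct exponent across \emph{both} regimes and in \emph{all} even dimensions $d=2n$; in $d=2$ this is essentially Koch–Tataru / Thangavelu-type estimates for the special Hermite (twisted Laplacian) projections and the kernel of $\Pi_k$ is an explicit Laguerre function, but for $n\ge2$ one must handle the $n$-fold tensor/Laguerre structure and verify that the worst case is still the one-dimensional Knapp mechanism, so that $\theta$ does not degrade with $n$ beyond the stated $\nu(r)$. A secondary technical point is the uniform summability / smallness of the tail $R_{k_0}$ at the endpoint $r=d/2$, where $\nu=0$; there one should split $\sum_{k\ne k_0}$ into $|k-k_0|\le \lambda_{k_0}/2$ and $|k-k_0|>\lambda_{k_0}/2$ and use the decay of $1/|\lambda_k-z|$ on the far piece together with the fact that the near piece, being $O(\log\lambda_{k_0})$, can be absorbed by requiring the slightly stronger smallness only when $V$ is first truncated to large index — or simply observe that \eqref{shrinking spectral cluster estimate} at $r=d/2$ only asserts $\delta(z)\lesssim\|V\|_{L^{d/2}}$, which is already implied by the global bound and needs no tail smallness. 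Finally, upgrading operator norms to Schatten–von Neumann norms (needed only if one wants the argument to mirror \cite{2015arXiv151000066C} verbatim and to make the Birman–Schwinger principle rigorous for merely $L^r$ potentials) is routine given the corresponding Schatten bound on $A\Pi_kB$, which follows from the kernel estimates by interpolation.
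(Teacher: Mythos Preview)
Your route is essentially correct for $r>d/2$ and is in fact somewhat more direct than the paper's. The paper does not sum the Birman--Schwinger series term by term; instead it first establishes a full resolvent bound $\|R_{0,\perp}(z)\|_{L^{q'}\to L^q}\lesssim (1+\delta(z)^{-1})\lambda_{k_0}^{\rho(q)}$ (Proposition~2.2), whose proof combines the Koch--Ricci projection estimates with a localized dispersive estimate of Koch--Tataru and interpolation. Your triangle-inequality summation of $\|A\Pi_kB\|/|\lambda_k-z|$ bypasses the dispersive step entirely, at the price of a harmless $\log\lambda_{k_0}$ factor in the tail $R_{k_0}$ (which still tends to $0$ since $\nu(r)<0$), and so yields the same spectral conclusion with less machinery. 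Your ``main obstacle'' is not one: the sharp $\|\Pi_k\|_{L^p\to L^{p'}}$ bounds hold for \emph{all} even $d$ by Koch--Ricci \cite{MR2314091}, and the sharpness argument you sketch (reverse Birman--Schwinger using a near-extremizer of the projection bound) is exactly what the paper does, via a duality/Cauchy--Schwarz reduction to choose $V=W^2$ with $W$ a near-extremizer.

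There is, however, a genuine gap at the endpoint $r=d/2$. Your tail $R_{k_0}=\sum_{k\ne k_0}\lambda_k^{0}/|\lambda_k-z|$ diverges (the far piece is a harmonic series), so the triangle-inequality argument gives nothing. Your proposed escape---that the global estimate $|\im z|^{1-d/(2r)}\lesssim 1+\|V\|_{L^r}$ already yields $\delta(z)\lesssim\|V\|_{L^{d/2}}$---is incorrect: at $r=d/2$ that inequality is vacuous ($|\im z|^0\lesssim\cdots$) and controls neither $\im z$ nor $\delta(z)$. The paper covers this endpoint by invoking a separate Sobolev-type resolvent bound at $q=2d/(d-2)$ (from \cite[Theorem~C.3]{2015arXiv151000066C}), which gives a finite constant $\|R_{0,\perp}(z)\|_{L^{2d/(d+2)}\to L^{2d/(d-2)}}\lesssim 1+\delta(z)^{-1}$ without any summation; this is precisely what exploiting cancellation (rather than the triangle inequality) buys. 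To repair your argument at $r=d/2$ you would need to import that endpoint resolvent estimate.
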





The proof of Theorem \ref{thm spectral clusters} will be based on the following proposition (here $R_{0,\perp}$ denotes the resolvent of  $H_{0,\perp}$). 

\begin{proposition}\label{proposition dispersive resolvent bound} Let $d\in 2\N$ and $q\in[2,2d/(d-2)]$. Assume $z\in\Lambda_{k_0}\cap\rho(H_{0,\perp})$. Then
\begin{align}\label{shrinking resolvent norm L-P}
\|R_{0,\perp}(z)\|_{L^{q'}(\R^{d})\to L^q(\R^{d})}\lesssim [1+\delta(z)^{-1}]\lambda_{k_0}^{\rho(q)},
\end{align}
where
\begin{align}\label{def. rho(q)}
\rho(q):=\begin{cases}
\frac{1}{q}-\frac{1}{2}\quad &2\leq q\leq \frac{2(d+1)}{d-1},\\
\frac{d-2}{2}-\frac{d}{q}\quad &\frac{2(d+1)}{d-1}\leq q\leq \infty.
\end{cases}
\end{align}
\end{proposition}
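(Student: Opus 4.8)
The plan is to diagonalize $H_{0,\perp}$ using the Landau level decomposition. Writing $\Pi_k$ for the orthogonal projection onto the $k$-th Landau level (the eigenspace of $H_{0,\perp}$ with eigenvalue $\lambda_k = 2k+n$), we have $R_{0,\perp}(z) = \sum_{k\in\N} (\lambda_k - z)^{-1} \Pi_k$. The operator norm from $L^{q'}$ to $L^q$ is then controlled by splitting the sum into the \emph{near} part (indices $k$ with $|\lambda_k - \lambda_{k_0}|$ small, say $\lesssim 1$) and the \emph{far} part. For the far part the denominators $|\lambda_k - z|$ grow like $|k - k_0|$, and we expect to sum a geometric-type series after inserting uniform $L^{q'}\to L^q$ bounds on the spectral projections $\Pi_k$; the dangerous factor $[1+\delta(z)^{-1}]$ comes entirely from the near part, where $z\in\Lambda_{k_0}$ can be at distance $\delta(z)$ from $\sigma(H_{0,\perp})$.

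The crux is therefore a uniform $L^{q'}(\R^d)\to L^q(\R^d)$ bound on $\Pi_k$ with the correct power of $\lambda_k$, namely $\|\Pi_k\|_{L^{q'}\to L^q}\lesssim \lambda_k^{\rho(q)}$ with $\rho(q)$ as in \eqref{def. rho(q)}. In even dimension $d=2n$ the Landau projections are tensor products / convolution operators whose kernels are built from Laguerre functions, and the relevant $L^{q'}\to L^q$ estimates are exactly the Koenig–Smith–Sogge–type eigenfunction bounds for the harmonic oscillator / Landau levels. I would invoke (or reprove via the known asymptotics of Laguerre kernels) the sharp two-exponent estimates: for $2\le q\le 2(d+1)/(d-1)$ one gets the Stein–Tomas / Strichartz-type exponent $\rho(q)=\tfrac1q-\tfrac12$, and for $q\ge 2(d+1)/(d-1)$ the Sobolev/Knapp-type exponent $\rho(q)=\tfrac{d-2}{2}-\tfrac dq$, with the transition at the Stein–Tomas exponent $q=2(d+1)/(d-1)$. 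Interpolating the $L^1\to L^\infty$ kernel bound with the trivial $L^2\to L^2$ bound $\|\Pi_k\|=1$ gives part of the range; the sharp endpoint at $q=2(d+1)/(d-1)$ is the genuinely hard input and should be extracted from oscillatory-integral estimates for the Laguerre kernel (uniform asymptotics of $L_k^{(\alpha)}$).

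Granting the projection bound, I would finish as follows. Write $\delta=\delta(z)$ and note $|\lambda_k - z|\gtrsim \max(\delta, |k-k_0|)$ for $z\in\Lambda_{k_0}$ once $k_0\ge K$ is large enough that $z$ is genuinely near the $k_0$-th level. Then
\begin{align*}
\|R_{0,\perp}(z)\|_{L^{q'}\to L^q}
\le \sum_{k\in\N}\frac{\|\Pi_k\|_{L^{q'}\to L^q}}{|\lambda_k - z|}
\lesssim \sum_{k}\frac{\lambda_k^{\rho(q)}}{\max(\delta,|k-k_0|)}.
\end{align*}
The term $k=k_0$ contributes $\lambda_{k_0}^{\rho(q)}/\delta$; the terms with $1\le |k-k_0|\lesssim 1$ contribute $\lesssim \lambda_{k_0}^{\rho(q)}$; and since $\rho(q)<0$ throughout the range $q\in[2,2d/(d-2)]$ (with $\rho(q)=0$ only at $q=2$, where one argues separately using $q'=q=2$ and the trivial resolvent bound $\|R_{0,\perp}(z)\|\le\delta^{-1}$), the tail $|k-k_0|\gtrsim 1$ is dominated by $\sum_{k}\lambda_k^{\rho(q)}/|k-k_0|\lesssim \lambda_{k_0}^{\rho(q)}\sum_{j\ge1} j^{\rho(q)-1} \lesssim \lambda_{k_0}^{\rho(q)}$, using $\lambda_k\approx\lambda_{k_0}$ on the bulk of the sum and absorbing the finitely many large-$k$ contributions by the decay. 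Collecting the pieces yields $\|R_{0,\perp}(z)\|_{L^{q'}\to L^q}\lesssim [1+\delta^{-1}]\lambda_{k_0}^{\rho(q)}$, which is \eqref{shrinking resolvent norm L-P}. The main obstacle, as noted, is the sharp Laguerre/projection estimate at and beyond the Stein–Tomas exponent; the summation over Landau levels is then routine.
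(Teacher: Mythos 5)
There is a genuine gap in your summation step, and it is exactly the difficulty the paper's proof is designed to circumvent. Your input bound $\|P_k\|_{L^{q'}\to L^q}\lesssim\lambda_k^{\rho(q)}$ is correct (it is the $TT^*$ form of the Koch--Ricci projection estimates), but feeding it into the triangle inequality $\|R_{0,\perp}(z)\|_{L^{q'}\to L^q}\le\sum_k\|P_k\|_{L^{q'}\to L^q}/|\lambda_k-z|$ cannot give the sharp bound. In the bulk range $1\le|k-k_0|\le k_0$ one has $\lambda_k\approx\lambda_{k_0}$ and $|\lambda_k-z|\approx|k-k_0|$, so this part of the sum is $\approx\lambda_{k_0}^{\rho(q)}\sum_{1\le j\le k_0}j^{-1}\approx\lambda_{k_0}^{\rho(q)}\log\lambda_{k_0}$: your replacement of it by $\lambda_{k_0}^{\rho(q)}\sum_{j\ge1}j^{\rho(q)-1}$ does not follow, since ``$\lambda_k\approx\lambda_{k_0}$'' produces a harmonic series, not the extra decay $j^{\rho(q)}$. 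So your route yields at best the estimate with a logarithmic loss. Moreover, your parenthetical claim that $\rho(q)=0$ only at $q=2$ is false: $\rho(2d/(d-2))=0$ as well, and at that endpoint even the far tail $\sum_{k>2k_0}\lambda_k^{\rho(q)}/|\lambda_k-z|\approx\sum_{k>2k_0}k^{-1}$ diverges, so no absolute summation over Landau levels can reach the Sobolev endpoint.

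The paper avoids the triangle inequality altogether. Step 1 proves only the half-power bound $\|R_{0,\perp}(z)\|_{L^{q'}\to L^2}\lesssim[1+\delta(z)^{-1}]\lambda_{k_0}^{\rho(q)/2}$, using orthogonality of the $P_k$ in $L^2$: there the denominators enter squared, $\sum_k\lambda_k^{\rho(q)}/|z-\lambda_k|^2$, and the sum converges with no logarithm. The missing half power is then supplied by a genuinely different ingredient, a rescaled semiclassical (Koch--Tataru) estimate on balls of radius $\sqrt{\mu}$ at the Stein--Tomas exponent $q=2(d+1)/(d-1)$, which upgrades $L^2$ control of $u$ to $L^{2(d+1)/(d-1)}$ control with the correct factor $\mu^{-1/(d+1)}$ and no $\delta^{-1}$ loss; the general $q$ then follows by interpolation with the trivial case $q=2$ and with the uniform Sobolev-type resolvent bound at $q=2d/(d-2)$ (itself a nontrivial cited input, not recoverable by level-by-level summation). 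If you wish to salvage your approach you would need some almost-orthogonality mechanism replacing the crude $\ell^1$ summation over levels; that is, in essence, what the paper's two-step scheme accomplishes.
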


\begin{proof}
1. We first prove the bound
\begin{align}\label{L2 Lq' bound}
\|R_{0,\perp}(z)\|_{L^{q'}(\R^{d})\to L^2(\R^{d})}\lesssim [1+\delta(z)^{-1}]\lambda_{k_0}^{\rho(q)/2}.
\end{align}
Let $P_k$ be the spectral projection onto the eigenspace corresponding to $\lambda_k$. The dual version of the spectral projection estimates in \cite{MR2314091} reads
\begin{align}\label{Koch Ricci}
\|P_k\|_{L^{q'}(\R^{d})\to L^2(\R^d)}\lesssim \lambda_k^{\rho(q)/2},
\end{align}
By orthogonality of the spectral projections, we have
\begin{align*}
\|R_{0,\perp}(z)f\|_{L^{2}(\R^d)}^2&= \sum_{k=0}^{\infty}\frac{1}{|z-\lambda_k|^2}\|P_kf\|_{L^2(\R^d)}^2
\lesssim \|f\|_{L^{q'}(\R^d)}^2\sum_{k=0}^{\infty}\frac{\lambda_k^{\rho(q)}}{|z-\lambda_k|^2}\\
&\lesssim  \lambda_{k_0}^{\rho(q)}\left(1+\delta(z)^{-1}\right)^2\|f\|_{L^{q'}(\R^d)}^2.
\end{align*}
Here we used the following estimate in the last step,
\begin{align*}
&\sum_{k=0}^{\infty}\frac{\lambda_k^{\rho(q)}}{|z-\lambda_k|^2}
\lesssim \sum_{\lambda_k\leq\lambda_{k_0}/2} \frac{\lambda_{k}^{\rho(q)}}{\lambda_{k_0}^2}
+\sum_{\lambda_{k_0}/2<\lambda_k}\frac{\lambda_{k_0}^{\rho(q)}}{|\lambda_k-z|^2}
\lesssim  \lambda_{k_0}^{\rho(q)}\left(1+\delta(z)^{-1}\right)^2.
\end{align*}
This proves \eqref{L2 Lq' bound}.

2. Let $u\in C_0^{\infty}(\R^d)$ and $x_0\in \R^d$. Set $f:=(H_{0,\perp}-z)u$ and $\mu:=\re z$. Without loss of generality we may assume that $\mu>1$. We claim that
\begin{align}\label{dispersive bound localized to balls of radius lambda}
\mu^{\frac{1}{2(d+1)}}
\|u\|_{L^{\frac{2(d+1)}{d-1}}(B_{\sqrt{\mu}}(x_0))}
\lesssim \|u\|_{L^{2}(B_{2\sqrt{\mu}}(x_0))}
+\mu^{-\frac{1}{2(d+1)}}\|f\|_{L^{\frac{2(d+1)}{d+3}}(B_{2\sqrt{\mu}}(x_0))}
\end{align}
where the implicit constant is independent of $u,\mu$ and $x_0$. Proceeding as in \cite{MR2314091} we set
\begin{align}\label{new variables}
\overline{x}:=\frac{x-x_0}{\sqrt{\mu}},\quad \overline{y}:=\frac{y-y_0}{\sqrt{\mu}}
\end{align}
and
\begin{align}\label{new functions}
\overline{u}(\overline{x},\overline{y})=\e^{-\I(x_0y-y_0x)}u(x,y),\quad \overline{f}(\overline{x},\overline{y})=\e^{-\I(x_0y-y_0x)}f(x,y).
\end{align}
We then have 
\begin{align}\label{equation for Lmu}
L_{\mu}\overline{u}=\mu\overline{f},
\end{align}
where
\begin{align*}
L_{\mu}:=\sum_{j=1}^n\left(-\I\frac{\partial}{\partial x_j}+\frac{\mu}{2}\,y_j\right)^2+\left(-\I\frac{\partial}{\partial y_j}-\frac{\mu}{2}\,x_j\right)^2-\mu^2-\I(\im z)\mu.
\end{align*}
The operator $L_{\mu}$ is obviously normal and satisfies the assumptions of \cite[Theorem 7]{MR2252331} with $\delta\approx\mu^{-1}$ there. From \eqref{equation for Lmu} and  \cite[Theorem 7 B)]{MR2252331} it then follows that
\begin{align}\label{dispersive bound localized to balls of radius 1 Sobolev spaces}
\|\overline{u}\|_{W_{\mu}^{\frac{1}{d+1},\frac{2(d+1)}{d-1}}(B_{1}(0))}
\lesssim \mu^{1/2}\|\overline{u}\|_{L^{2}(B_{2}(0)}
+\mu\|\overline{f}\|_{W_{\mu}^{-\frac{1}{d+1},\frac{2(d+1)}{d+3}}(B_2(0))}.
\end{align}
Here, 
\begin{align*}
W_{\mu}^{s,p}:=\set{u\in \mathcal{S}'(\R^d)}{(\mu^2-\Delta)^{s/2}u\in L^p(\R^d)},\quad \|u\|_{W_{\mu}^{s,p}}:=\|(\mu^2-\Delta)^{s/2}u\|_{L^p}.
\end{align*}
Note that in the region $\{|\xi|\lesssim \mu\}$, we have $\|u\|_{W_{\mu}^{s,p}}\approx \mu^s\|u\|_{L^p}$, while in the (elliptic) region $\{|\xi|\gg \mu\}$, we have $\|u\|_{W_{\mu}^{s,p}}\gtrsim \mu^s\|u\|_{L^p}$ for $s\geq 0$ and $\|u\|_{W_{\mu}^{s,p}}\lesssim \mu^s\|u\|_{L^p}$ for $s\leq 0$. These estimates follow from standard Bernstein inequalities, see e.g.\ \cite[Appendix A]{MR2233925}. Therefore, \eqref{dispersive bound localized to balls of radius 1 Sobolev spaces} implies that
\begin{align*}
\mu^{\frac{1}{d+1}}\|\overline{u}\|_{L^{\frac{2(d+1)}{d-1}}(B_{1}(0))}
\lesssim \mu^{1/2}\|\overline{u}\|_{L^{2}(B_{2}(0)}
+\mu\mu^{-\frac{1}{d+1}}\|\overline{f}\|_{L^{\frac{2(d+1)}{d+3}}(B_2(0))}.
\end{align*}
By the change of variables \eqref{new variables}--\eqref{new functions}, this is equivalent to \eqref{dispersive bound localized to balls of radius lambda}.

3. Summing \eqref{dispersive bound localized to balls of radius lambda} over a finitely overlapping partition of $\R^d$ into balls of radius $\sqrt{\mu}$ yields
\begin{align}\label{dispersive bound}
\mu^{\frac{1}{2(d+1)}}
\|u\|_{L^{\frac{2(d+1)}{d-1}}(\R^d)}
\lesssim \|u\|_{L^{2}(\R^d)}
+\mu^{-\frac{1}{2(d+1)}}\|f\|_{L^{\frac{2(d+1)}{d+3}}(\R^d)}
\end{align}
Recalling that $\mu=\lambda_{k_0}+\mathcal{O}(1)$, $f=(H_{0,\perp}-z)u$, $\rho(q)=1/(d+1)$ for $q=2(d+1)/(d-1)$, and combining \eqref{L2 Lq' bound} with \eqref{dispersive bound}, we arrive at
\begin{align*}
\|u\|_{L^{\frac{2(d+1)}{d-1}}(\R^d)}
\lesssim (1+\delta(z)^{-1})\lambda_{k_0}^{-\frac{1}{d+1}}\|(H_{0,\perp}-z)u\|_{L^{\frac{2(d+1)}{d+3}}(\R^d)}.
\end{align*}
Since $C_0^{\infty}(\R^n)$ is a core for $H_{0,\perp}$ (\cite{MR1243098}), the above inequality is equivalent to \eqref{shrinking resolvent norm L-P} with $q=2(d+1)/(d-1)$; see e.g.\ the proof of \cite[Theorem C.3]{2015arXiv151000066C} for details of this argument. The general case follows by interpolation between this case and the cases $q=2$ and $q=2d/(d-2)$. In the former case, \eqref{shrinking resolvent norm L-P} is trivial. In the latter case, it follows from \cite[Theorem C.3]{2015arXiv151000066C}).
\end{proof}

\begin{proof}[Proof of Theorem \ref{thm spectral clusters}]
1. Let $q=2r'$. We then have $\nu(r)=\rho(q)$ (see \eqref{def. nu(r)} and \eqref{def. rho(q)}), and $\nu(r)\leq 0$ for $r\in [d/2,\infty]$. Moreover, in this range of $r$, we have $q=2r'\in[2,2d/(d-2)]$. Assume $z\in \sigma(H)\cap\Lambda_{k_0}$. By the Birman-Schwinger principle, H\"older's inequality and \eqref{shrinking resolvent norm L-P} we have
\begin{align}\label{Birman-Schwinger}
1\leq \|V^{1/2}R_{0,\perp}(z)|V|^{1/2}\|_{L^2(\R^d)\to L^2(\R^d)}\leq C 
\|V\|_{L^r(\R^d)}[1+\delta(z)^{-1}]\lambda_{k_0}^{\rho(q)}.
\end{align}
Here we have set $V^{1/2}:=|V|^{1/2}\sgn(V)$ with 
\begin{align*}
\sgn(z):=
\begin{cases}
\frac{\overline{z}}{|z|}&\quad \mbox{if }z\neq 0,\\
0&\quad \mbox{if }z=0.
\end{cases}
\end{align*}
Choosing $K$ so large that
\begin{align*}
C\|V\|_{L^r(\R^d)}\lambda_{K}^{\rho(q)}\leq \frac{1}{2},
\end{align*}
we infer from \eqref{Birman-Schwinger} that
\begin{align*}
\frac{1}{2}\leq C 
\|V\|_{L^r(\R^d)}\delta(z)^{-1}\lambda^{\rho(q)}.
\end{align*}
This proves \eqref{shrinking spectral cluster estimate}.

2. In order to show that the result is sharp we claim that it is sufficient to prove that for any fixed $k_0\in\N$ there exists $V\in L^r(\R^d)$ (depending on $k_0$) such that $\|V\|_{L^r}=1$, $V\leq 0$, and 
\begin{align}\label{claim sharpness}
\||V|^{1/2}P_{k_0}|V|^{1/2}\|\geq c_0 \lambda_{k_0}^{\rho(q)}.
\end{align}
Here, $c_0$ is some $k_0$-independent constant. To prove the claim, 
we define the Birman-Schwinger operators
\begin{align*}
Q(z;V):=|V|^{1/2}R_{0,\perp}(z)|V|^{1/2}
\end{align*}
where $z\in\rho(H_{0,\perp})$ and $V\in L^r(\R^d)$ satisfies \eqref{claim sharpness}.
Since
\begin{align*}
Q'(z;V)=|V|^{1/2}R_{0,\perp}(z)^2|V|^{1/2}\geq 0,\quad z\in\rho(H_{0,\perp})\cap\R,
\end{align*}
and $\|Q(z;V)\|\lesssim \lambda_{k_0}^{\rho(q)}<1$ for $z\in[\lambda_{k_0}-1/2,\lambda_{k_0}+1/2]\setminus\{\lambda_{k_0}\}$ and $k_0$ sufficiently large, the claim will follow by a standard application of the Birman-Schwinger principle once we prove that there exists $V\in L^r(\R^d)$ such that the operator $Q(a;V)$, with $a:=\lambda_{k_0}-\frac{1}{2}c_0\lambda_{k_0}^{\rho(q)}$, has an eigenvalue $\mu\geq 1$. We write
\begin{align*}
Q(a;V)=Q_0(a;V)+Q_1(a;V),
\end{align*}
where
\begin{align*}
Q_0(z;V):=\frac{1}{\lambda_{k_0}-z}|V|^{1/2}P_{k_0}|V|^{1/2},\quad Q_1(z;V):=\sum_{k\neq k_0}\frac{1}{\lambda_k-z}|V|^{1/2}P_{k}|V|^{1/2}.
\end{align*}
By \eqref{shrinking resolvent norm L-P}, we have $Q_1(a;V)=\mathcal{O}(\lambda_{k_0}^{\rho(q)})$. 
Moreover, since $Q_0(a;V)$ is nonnegative and compact \cite[Lemma 5.1]{MR1044429}, it follows that $\mu_0(V):=\|Q_0(a;V)\|$ is its largest eigenvalue. Let $\psi\in L^2(\R^d)$ be the corresponding normalized eigenfunction. Then
\begin{align*}
\|(Q(a;V)-\mu_0(V))\psi\|_2=\|Q_1(a;V)\psi\|_2=\mathcal{O}(\lambda_{k_0}^{\rho(q)}).
\end{align*}
Since $Q(a;V)$ is selfadjoint, this implies that
\begin{align}\label{nonempty intersection}
\sigma(Q(a;V))\cap [\mu_0(V)-\mathcal{O}(\lambda_{k_0}^{\rho(q)}),\mu_0(V)+\mathcal{O}(\lambda_{k_0}^{\rho(q)})]\neq \emptyset.
\end{align} 
Choosing $c=c_0/2$, we have by \eqref{claim sharpness}
\begin{align}\label{mu0}
\mu_0(V)=\|Q_0(a;V)\|=\frac{1}{c}\lambda_{k_0}^{-\rho(q)}\|V^{1/2}P_{k_0}V^{1/2}\|\geq \frac{c_0}{c}= 2.
\end{align}
It follows from \eqref{nonempty intersection} that $Q(a:V)$ has an eigenvalue $\mu\geq 1$ for $k_0$ sufficiently large. 

3. It remains to prove the claim \eqref{claim sharpness}. We use the fact that the spectral projection estimates \eqref{Koch Ricci} are sharp. In the $TT^*$ version, this means that
\begin{align}\label{sharpness spectral projection estimates TT*}
\|P_k\|_{L^{q'}\to L^q}\geq 2c_0\lambda_{k}^{\rho(q)},\quad k\in\N.
\end{align}
By H\"older's inequality and a duality argument,
we have
\begin{align}\label{Holder duality}
\|P_k\|_{L^{q'}\to L^q}=\sup_{\|W_1\|_{L^{2r}}=\|W_2\|_{L^{2r}}=1}\|W_1P_kW_2\|.
\end{align} 
Moreover, the Cauchy-Schwarz inequality yields
\begin{align*}
|\langle W_1P_kW_2f,g\rangle|&=|\langle P_kW_2f,P_kW_1g\rangle|\leq \|P_kW_2f\|\|P_kW_1g\|\\
&=\langle W_2P_kW_2f,f\rangle^{1/2}\langle W_1P_kW_1g,g\rangle^{1/2}\\
&\leq \|W_1P_kW_1\|^{1/2}\|W_2P_kW_2\|^{1/2}\|f\|\|g\|,
\end{align*}
and hence
\begin{align}\label{Cauchy-Schwarz}
\|W_1P_kW_2\|\leq\|W_1P_kW_1\|^{1/2}\|W_2P_kW_2\|^{1/2}.
\end{align}
Combining \eqref{sharpness spectral projection estimates TT*}--\eqref{Cauchy-Schwarz}, we get
\begin{align*}
\sup_{\|W\|_{L^{2r}}=1}\|WP_kW\|\geq 2c_0\lambda_{k}^{\rho(q)}.
\end{align*}
Therefore, we can choose a normalized $W\in L^{2r}(\R^d)$ such that 
\begin{align*}
\|WP_{k_0}W\|\geq c_0\lambda_{k_0}^{\rho(q)}. 
\end{align*}
The claim \eqref{claim sharpness} follows with $V=W^2$.
\end{proof}

\section{Limiting absorption principle and unique continuation in odd dimensions}

\subsection{Limiting absorption principle}

Consider the Hamiltonian with constant magnetic field in $d=2n+1$ dimensions.
\begin{align}\label{eq. unperturbed Landau Hamiltonian odd d}
H_0=\sum_{j=1}^n\left(-\I\frac{\partial}{\partial_{x_j}}+\frac{y_j}{2}\right)^2+\left(-\I\frac{\partial}{\partial_{y_j}}-\frac{x_j}{2}\right)^2-\frac{\partial^2}{\partial z^2},\quad (x,y,z)\in\R^{2n+1}.
\end{align}
The spectrum of $H_0$ is purely absolutely continuous and $\{\lambda_k\}_{k\in\N}$ play the role of thresholds.
%
%
Writing $x_{\perp}=(x,y)\in\R^{2n}$ and $z=x_d\in\R$ we introduce the following mixed Lebesgue spaces:
\begin{align*}
\mathcal{X}_q&:=\left(L_{x_d}^{\frac{2}{1-2\rho(q)}}(\R)\cap L_{x_d}^{1}(\R)\right)L_{x^{\perp}}^{q'}(\R^{2n}),\\
\mathcal{V}_q&:=\left(L_{x_d}^{-\frac{1}{2\rho(q)}}(\R)\cap  L_{x_d}^{1}(\R)\right)L_{x^{\perp}}^{r}(\R^{2n}),\\
\mathcal{X}_q^*&:=\left(L_{x_d}^{\frac{2}{1+2\rho(q)}}(\R)+L_{x_d}^{\infty}(\R)\right)L_{x^{\perp}}^{q}(\R^{2n}).
\end{align*}
Here $q$ and $r$ are related by $q=2r'$. As before, we will sometimes omit $\R$ and $\R^{2n}$ from the notation of Lebesgue spaces. The resolvents of $H_0$ and $H_0+V$ will be denoted by $R_0$ and $R$, respectively.

Our main result in this subsection is the following limiting absorption principle.

\begin{theorem}[Limiting Absorption Principle]\label{thm LAP}
Let $d\geq 3$, $d\in 2\N+1$, $r\in (d/2,\infty)$ and $q=2r'\in (2,2d/(d-2))$. Let $J$ be a compact subset of $\R\setminus\{\lambda_k\}_{k\in\N}$. If $\|V\|_{\mathcal{V}_q}$ is sufficiently small, then
\begin{align}\label{LAP small perturbation}
\sup_{\stackrel{\lambda\in J}{\epsilon\in[-1,1]\setminus\{0\}}}\|R(\lambda+\I\epsilon)\|_{\mathcal{X}_q\to\mathcal{X}_q^*}<\infty.
\end{align}
\end{theorem}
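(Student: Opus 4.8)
The plan is to reduce the perturbed limiting absorption principle to a uniform resolvent estimate for the free operator $H_0$ in the mixed-norm spaces, and then to close the argument by a Neumann series using the smallness of $\|V\|_{\mathcal{V}_q}$. Concretely, I would first establish the free bound
\begin{align}\label{plan free LAP}
\sup_{\stackrel{\lambda\in J}{\epsilon\in[-1,1]\setminus\{0\}}}\|R_0(\lambda+\I\epsilon)\|_{\mathcal{X}_q\to\mathcal{X}_q^*}<\infty,
\end{align}
which is the analogue of Proposition \ref{proposition dispersive resolvent bound} in the odd-dimensional setting. The key point is that $H_0$ in \eqref{eq. unperturbed Landau Hamiltonian odd d} decouples as $H_{0,\perp}\otimes I + I\otimes(-\partial_{x_d}^2)$, so one can take a partial Fourier transform in the $x_d$ variable: for each fixed dual variable $\tau\in\R$, the fiber operator is $H_{0,\perp}+\tau^2$ acting on $\R^{2n}$, and $R_0(\lambda+\I\epsilon)$ becomes the direct integral over $\tau$ of $R_{0,\perp}(\lambda-\tau^2+\I\epsilon)$. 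The dispersive estimate \eqref{shrinking resolvent norm L-P} then gives, for each $\tau$ with $\lambda-\tau^2$ in a cluster $\Lambda_{k_0}$,
\[
\|R_{0,\perp}(\lambda-\tau^2+\I\epsilon)\|_{L^{q'}\to L^q}\lesssim [1+\delta(\lambda-\tau^2+\I\epsilon)^{-1}]\lambda_{k_0}^{\rho(q)}.
\]
Since $J$ is a compact set avoiding all thresholds $\lambda_k$, only finitely many clusters are reached and the index $k_0$ stays bounded, so $\lambda_{k_0}^{\rho(q)}\lesssim_J 1$; the genuine singularity comes from the set of $\tau$ where $\lambda-\tau^2$ is close to some $\lambda_k$, i.e.\ $\delta(\lambda-\tau^2)$ small.

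The heart of \eqref{plan free LAP} is therefore the one-dimensional convolution/restriction estimate that absorbs the factor $[1+\delta(\lambda-\tau^2+\I\epsilon)^{-1}]$ in the $x_d$ variable. Near a threshold $\lambda_k$, writing $s=\lambda-\lambda_k$, we have $\delta(\lambda-\tau^2+\I\epsilon)\approx |s-\tau^2+\I\epsilon|$ (up to the distance to the nearest cluster, which is bounded below on $J$), and the operator of Fourier multiplication by $(s-\tau^2+\I\epsilon)^{-1}$ acting between Lebesgue spaces in $x_d$ is exactly the one-dimensional resolvent of $-\partial_{x_d}^2-s$; its uniform (in $\epsilon$) boundedness from $L^{2/(1-2\rho(q))}\cap L^1$ to $L^{2/(1+2\rho(q))}+L^\infty$ is the classical Agmon–Kenig–Ruiz–type limiting absorption principle in one dimension, with the exponents on the $x_d$-factors in $\mathcal{X}_q$ and $\mathcal{X}_q^*$ chosen precisely so that $\rho(q)$ matches the gain. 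This is why the spaces are defined as they are: the $L^1_{x_d}$ and $L^\infty_{x_d}$ components handle the large-$|\tau|$ (elliptic) regime where $\delta(\lambda-\tau^2)\gtrsim \tau^2$ grows and one only needs the crude bound $\|R_{0,\perp}(\lambda-\tau^2)\|_{L^{q'}\to L^q}\lesssim 1$, while the Lorentz-type components $L^{2/(1\mp 2\rho(q))}_{x_d}$ capture the sharp decay near the finitely many thresholds. I would assemble \eqref{plan free LAP} by splitting $\tau$ into the "near-threshold" region and the "elliptic" region, applying the fiberwise bound \eqref{shrinking resolvent norm L-P} in $x^\perp$ on each fiber, and then using Minkowski's inequality together with the 1D convolution estimate to reassemble in $x_d$; the existence of the boundary values $R_0(\lambda\pm\I 0)$ follows from the density of test functions and the uniform bound by a standard argument.

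With \eqref{plan free LAP} in hand, the perturbed estimate is a soft perturbation argument: factor $V=|V|^{1/2}V^{1/2}$ as in the proof of Theorem \ref{thm spectral clusters}, observe that by Hölder in $x^\perp$ (with exponent $r$) and the matching $L^{2/(1\mp 2\rho(q))}_{x_d}\cap L^1_{x_d}$ structure of $\mathcal{V}_q$ versus $\mathcal{X}_q,\mathcal{X}_q^*$, multiplication by $|V|^{1/2}$ maps $\mathcal{X}_q^*\to\mathcal{X}_q$ with norm $\lesssim\|V\|_{\mathcal{V}_q}^{1/2}$, so that the free resolvent identity gives
\[
\|V^{1/2}R_0(\lambda+\I\epsilon)|V|^{1/2}\|_{L^2\to L^2}\lesssim \|V\|_{\mathcal{V}_q}\sup_{\lambda\in J,\epsilon}\|R_0(\lambda+\I\epsilon)\|_{\mathcal{X}_q\to\mathcal{X}_q^*}.
\]
When $\|V\|_{\mathcal{V}_q}$ is small enough this is $<1$ uniformly, so $I+V^{1/2}R_0(\lambda+\I\epsilon)|V|^{1/2}$ is invertible by Neumann series, and the standard resolvent identity $R=R_0-R_0|V|^{1/2}(I+V^{1/2}R_0|V|^{1/2})^{-1}V^{1/2}R_0$ expresses $R(\lambda+\I\epsilon):\mathcal{X}_q\to\mathcal{X}_q^*$ with a norm bounded uniformly in $\lambda\in J$ and $\epsilon\in[-1,1]\setminus\{0\}$. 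I expect the main obstacle to be the careful bookkeeping of the 1D limiting absorption principle with the precise Lorentz/intersection exponents $2/(1\mp 2\rho(q))$, and in particular verifying that the duality between $\mathcal{X}_q$ and $\mathcal{X}_q^*$ and the Hölder pairing with $\mathcal{V}_q$ close exactly (rather than with an $\epsilon$-loss), since this is where the sharpness of the exponent $\rho(q)$ must be used rather than a lossy Sobolev embedding.
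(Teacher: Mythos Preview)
Your overall strategy---prove the free bound \eqref{plan free LAP} first, then close via a Birman--Schwinger/Neumann series argument using the smallness of $\|V\|_{\mathcal{V}_q}$---is correct and matches the paper exactly. The perturbation step is essentially the paper's Part~2, with the minor correction that the H\"older pairing gives $|V|^{1/2}:L^2\to\mathcal{X}_q$ and $|V|^{1/2}:\mathcal{X}_q^*\to L^2$ (not $\mathcal{X}_q^*\to\mathcal{X}_q$); this is what the resolvent identity you wrote actually needs, and the exponents in $\mathcal{V}_q$ are tuned precisely for these two mappings.

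The substantive difference is in how the free estimate is obtained. The paper does \emph{not} take a partial Fourier transform in $x_d$ and invoke Proposition~\ref{proposition dispersive resolvent bound} fiberwise. Instead it proves a pointwise-in-$x_d$ kernel bound (Lemma~\ref{lemma resolvent fixed xd}): expand $R_0(z)=\sum_k P_k\otimes(-\partial_{x_d}^2-(z-\lambda_k))^{-1}$, insert the explicit 1D resolvent kernel $\e^{\I\sqrt{z-\lambda_k}|x_d-y_d|}/(2\I\sqrt{z-\lambda_k})$, apply Minkowski and the Koch--Ricci spectral projection bounds \eqref{Koch Ricci} \emph{level by level}, and then sum in $k$ to obtain a scalar kernel of the form $|x_d-y_d|^{-1-2\rho(q)}+C_J$ acting on $\|f(\cdot,y_d)\|_{L^{q'}_{x_\perp}}$. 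The free LAP is then immediate from the one-dimensional Hardy--Littlewood--Sobolev inequality (for the singular part) and the $L^1_{x_d}$ component of $\mathcal{X}_q$ (for the bounded part).

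Your Fourier route has a gap in the reassembly step. After applying the $L^{q'}_{x_\perp}\to L^q_{x_\perp}$ bound of Proposition~\ref{proposition dispersive resolvent bound} at each $\tau$, you are left trying to control $\|R_0(z)f\|_{L^{p'}_{x_d}L^q_{x_\perp}}$ in terms of a scalar weight $[1+\delta(\lambda-\tau^2+\I\epsilon)^{-1}]$ times $\|\hat f(\cdot,\tau)\|_{L^{q'}_{x_\perp}}$; but the $L^q_{x_\perp}$ norm does not commute with Fourier inversion in $x_d$, so one cannot simply feed this envelope into a 1D multiplier or convolution estimate. Minkowski on the inverse Fourier integral would land you in $L^1_\tau$ of $\|\hat f(\cdot,\tau)\|_{L^{q'}_{x_\perp}}$, which is not controlled by $\|f\|_{\mathcal{X}_q}$. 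The cure is exactly the paper's device: stay in physical $x_d$-space and exploit the convolution structure of the 1D resolvent on each Landau level separately, so that the $k$-sum can be performed \emph{after} taking $L^q_{x_\perp}$ norms. Once you do that, the relevant input is the spectral projection estimate \eqref{Koch Ricci}, not the already-summed resolvent bound \eqref{shrinking resolvent norm L-P}.
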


\begin{lemma}\label{lemma resolvent fixed xd}
Assume $z\in\Lambda_{k_0}$, $k_0\in\N$, and $0<|\im z|<1$. Then for $2\leq q\leq \infty$, we have the estimate
\begin{align*}
\|R_0(z)f(\cdot,x_d)\|_{L^q(\R^{2n})}&\lesssim \int_{-\infty}^{\infty}\left\{|x_d-y_d|^{-1-2\rho(q)}\right.\\
&\left.+k_0^{\rho(q)}(k_0^{\frac{1}{2}}+\delta(z)^{-\frac{1}{2}})\right\}\|f(\cdot,y_d)\|_{L^{q'}(\R^{2n})}\rd y_d
\end{align*}
for every $f\in C_c^{\infty}(\R^d)$. Here, $\Lambda_{k_0}$, $\delta(z)$ and $\rho(q)$ are defined in \eqref{def. Lambdak0} and\eqref{def. rho(q)}.
\end{lemma}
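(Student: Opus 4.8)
The plan is to diagonalise $H_0$ in the $x_{\perp}$-variable. Write $H_0=H_{0,\perp}\otimes I-I\otimes\partial_{x_d}^2$ on $L^2(\R^{2n})\otimes L^2(\R)$; since $H_{0,\perp}=\sum_k\lambda_kP_k$ and the two commuting self-adjoint summands have a joint spectral resolution, $R_0(z)=\sum_kP_k\otimes(-\partial_{x_d}^2-(z-\lambda_k))^{-1}$. The one-dimensional free resolvent $(-\partial_{x_d}^2-\zeta)^{-1}$ with $\im\zeta\neq0$ has integral kernel $G_{\zeta}(s)=\frac{\I}{2\sqrt{\zeta}}\e^{\I\sqrt{\zeta}|s|}$, where $\sqrt{\zeta}$ is the branch with $\im\sqrt{\zeta}>0$ (well defined since $\zeta\notin[0,\infty)$). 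For $f\in C_c^{\infty}(\R^d)$ the relevant sums and integrals converge absolutely (the $P_k$ have Schwartz kernels), so by Fubini, Minkowski's integral inequality, and the triangle inequality for operator norms,
\[
\|R_0(z)f(\cdot,x_d)\|_{L^q(\R^{2n})}\leq\int_{\R}S(|x_d-y_d|)\,\|f(\cdot,y_d)\|_{L^{q'}(\R^{2n})}\rd y_d,\qquad S(t):=\sum_k\frac{\e^{-t\im\sqrt{z-\lambda_k}}}{2|z-\lambda_k|^{1/2}}\,\|P_k\|_{L^{q'}\to L^q}.
\]
Thus the lemma reduces to the uniform bound $S(t)\lesssim t^{-1-2\rho(q)}+k_0^{\rho(q)}(k_0^{1/2}+\delta(z)^{-1/2})$ for all $t>0$.

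I would use three ingredients. (i) Combining the dual spectral projection estimate \eqref{Koch Ricci} with the $TT^*$-identity $\|P_k\|_{L^{q'}\to L^q}\leq\|P_k\|_{L^{q'}\to L^2}^2$ (valid since $P_k=P_k^2=P_k^*$) gives $\|P_k\|_{L^{q'}\to L^q}\lesssim\lambda_k^{\rho(q)}$ on $\R^{2n}$. (ii) Since the thresholds $\lambda_k$ are $2$-separated and $|\re z-\lambda_{k_0}|\leq1$, the threshold nearest to $z$ is $\lambda_{k_0}$, so $\delta(z)=|z-\lambda_{k_0}|$; and for $k\neq k_0$ one has $|z-\lambda_k|\geq|\re z-\lambda_k|\geq|k-k_0|$. (iii) $\im\sqrt{z-\lambda_k}>0$ for all $k$ (because $\im z\neq0$), so $\e^{-t\im\sqrt{z-\lambda_k}}\leq1$; moreover, for $\lambda_k>\re z$ one has the sharper lower bound $\im\sqrt{z-\lambda_k}\geq(\lambda_k-\re z)^{1/2}$, immediate from $(\im w)^2=(\re w)^2+(\lambda_k-\re z)$ for $w^2=z-\lambda_k$.

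Then I split $S(t)$ into the contributions of $k=k_0$, of $\{k\neq k_0:\lambda_k\leq2\lambda_{k_0}\}$, and of $\{\lambda_k>2\lambda_{k_0}\}$. The term $k=k_0$ is $\lesssim\delta(z)^{-1/2}\lambda_{k_0}^{\rho(q)}$ by (i)--(iii). In the middle range, bounding $\e^{-t\im\sqrt{z-\lambda_k}}\leq1$ and using $|z-\lambda_k|\geq|k-k_0|$ leaves $\sum_{k\neq k_0,\,\lambda_k\leq2\lambda_{k_0}}|k-k_0|^{-1/2}\lambda_k^{\rho(q)}$, and splitting this sub-sum according to whether $|k-k_0|\leq k_0/2$ (using $\rho(q)>-1$) bounds it by $\lesssim\lambda_{k_0}^{\rho(q)+1/2}$. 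In the far range, $\lambda_k-\re z\approx\lambda_k$ and $|z-\lambda_k|\approx\lambda_k$, so by (iii)
\[
\sum_{\lambda_k>2\lambda_{k_0}}\frac{\e^{-t\im\sqrt{z-\lambda_k}}}{2|z-\lambda_k|^{1/2}}\,\|P_k\|_{L^{q'}\to L^q}\lesssim\sum_{\lambda_k>2\lambda_{k_0}}\lambda_k^{\rho(q)-1/2}\e^{-ct\lambda_k^{1/2}}\lesssim\int_0^{\infty}s^{\rho(q)-1/2}\e^{-c'ts^{1/2}}\rd s\approx t^{-1-2\rho(q)},
\]
the last two steps being a sum-to-integral comparison and the substitution $u=s^{1/2}$; this requires $2\rho(q)+1>0$, which holds because $\rho(q)\geq-1/(d+1)$. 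Adding the three pieces and using $\lambda_{k_0}\approx k_0$ gives the required bound on $S(t)$, hence the lemma.

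The main obstacle is the far range: it is the sole source of the $t^{-1-2\rho(q)}$ term and genuinely relies on the exponential decay $\e^{-t\im\sqrt{z-\lambda_k}}$ of the one-dimensional resolvent at the sub-threshold energies $z-\lambda_k$ with $\lambda_k$ large, since the bare series $\sum_{\lambda_k>2\lambda_{k_0}}\lambda_k^{\rho(q)-1/2}$ diverges. Everything else — the Fubini/Minkowski reduction, the elementary counting in the middle range, and checking that all estimates are uniform in $k_0$ (including small $k_0$) — is routine.
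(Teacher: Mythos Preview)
Your proof is correct and follows essentially the same strategy as the paper: diagonalise via the spectral projections $P_k$ and the explicit one-dimensional resolvent kernel, apply Minkowski's inequality to reduce to the scalar sum $S(t)$, and then split that sum into near, intermediate, and far ranges of $k$. The paper partitions slightly more finely (five pieces rather than three) and phrases the far-range estimate via the change of variables $r\mapsto t^2r$ rather than $u=s^{1/2}$, but these are cosmetic differences; the key inputs---the Koch--Ricci projection bounds, the lower bound on $\im\sqrt{z-\lambda_k}$ for $k$ large, and the integral comparison yielding $t^{-1-2\rho(q)}$---are identical.
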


\begin{proof}
We have
\begin{align*}
R_0(z)f=\sum_{k=0}^{\infty}(P_k\otimes (-\partial_{x_d}^2-(z-\lambda_k))^{-1})f.
\end{align*}
The resolvent kernel of $(-\partial_{x_d}^2-\mu)^{-1}$ is given by
\begin{align*}
(-\partial_z^2-\mu)^{-1}(x_d,y_d)=\frac{\e^{\I\sqrt{\mu}|x_d-y_d|}}{2\I\sqrt{\mu}}
\end{align*}
where $\sqrt{\cdot}:\C\setminus[0,\infty)\to\C^+$ is the principal branch of the square root. Therefore,
\begin{align*}
R_0(z)f(\cdot,x_d)=\sum_{k=0}^{\infty}\int_{-\infty}^{\infty}\frac{\e^{\I\sqrt{z-\lambda_k}|x_d-y_d|}}{2\I\sqrt{z-\lambda_k}}(P_k\otimes \mathbf{1})f(\cdot,y_d)\rd y_d.
\end{align*}
By Minkowski's inequality, it follows that
\begin{align*}
\|R_0(z)f(\cdot,x_d)\|_{L^q(\R^{2n})}\leq \sum_{k=0}^{\infty}\int_{-\infty}^{\infty}\frac{\e^{-\im\sqrt{z-\lambda_k}|x_d-y_d|}}{|z-\lambda_k|^{1/2}}\|(P_k\otimes \mathbf{1})f(\cdot,y_d)\|_{L^q(\R^{2n})}\rd y_d.
\end{align*}
Thus, by \cite{MR2314091}, 
\begin{align*}
\|R_0(z)f(\cdot,x_d)\|_{L^q(\R^{2n})}\lesssim \sum_{k=0}^{\infty}\lambda_{k}^{\rho(q)}\int_{-\infty}^{\infty}\frac{\e^{-\im\sqrt{z-\lambda_k}|x_d-y_d|}}{|z-\lambda_k|^{1/2}}\|f(\cdot,y_d)\|_{L^{q'}(\R^{2n})}\rd y_d.
\end{align*}
By Fubini's theorem it remains to prove that
\begin{align}\label{Claim bound for sum over k}
 \sum_{k=0}^{\infty}\lambda_{k}^{\rho(q)}\frac{\e^{-\im\sqrt{z-\lambda_k}|t|}}{|z-\lambda_k|^{1/2}}
 \lesssim |t|^{-1-2\rho(q)}+k_0^{\rho(q)}(k_0^{\frac{1}{2}}+\delta(z)^{-\frac{1}{2}}).
\end{align}
We write $z=\lambda+\beta+\I\tau$, where $|\beta|,|\tau|\leq 1$, and $w=z-\lambda_k$. Then for $|k-k_0|\geq 1$ we have
\begin{align*}
|w|^2=(2(k_0-k)+\beta)^2+\tau^2\geq 2|k-k_0|-|\beta|\geq |k-k_0|.
\end{align*}
Moreover, if $k>2k_0$, then $\re w<0$, which implies that 
\begin{align*}
\im\sqrt{w}\geq \frac{\sqrt{|w|}}{\sqrt{2}}\geq \frac{\sqrt{k}}{2},\quad k>2k_0.
\end{align*}
We can thus estimate the sum in \eqref{Claim bound for sum over k} by
\begin{align*}
\sum_{k=1}^{k_0-2}\frac{k^{\rho(q)}}{(k_0-k)^{1/2}}
+\sum_{k=k_0-1}^{k_0+1}\frac{k^{\rho(q)}}{\delta(z)^{1/2}}
+\sum_{k=k_0+2}^{2k_0}\frac{k^{\rho(q)}}{(k-k_0)^{1/2}}\\
+\sum_{k>2k_0}\frac{k^{\rho(q)}\e^{-\frac{1}{2}\sqrt{k}|t|}}{\sqrt{k/2}}+\mathcal{O}(1).
\end{align*}
Consider the functions
\begin{align*}
(0,k_0-1)&\ni r\mapsto \frac{r^{\rho(q)}}{(k_0-r)^{1/2}},\\
(k_0+1,2k_0)&\ni r\mapsto \frac{r^{\rho(q)}}{(r-k_0)^{1/2}},\\
(2k_0,\infty)&\ni r\mapsto \frac{r^{\rho(q)}\e^{-\frac{1}{2}\sqrt{r}|t|}}{\sqrt{r}}.
\end{align*}
The last two are monotonically decreasing, while the first has a single change of monotonicity at
$r=k_0^{-1}(1/(2\rho(q))+1)^{-1}$. Hence \eqref{Claim bound for sum over k} is bounded by
\begin{align*}
\int_0^{k_0-1}\frac{r^{\rho(q)}}{(k_0-r)^{1/2}}\rd r+3\frac{k_0^{\rho(q)}}{\delta(z)^{1/2}}
+\int_{k_0+1}^{2k_0}\frac{r^{\rho(q)}}{(r-k_0)^{1/2}}\rd r\\
+\int_{2k_0}^{\infty}\frac{r^{\rho(q)}\e^{-\frac{1}{2}\sqrt{r}|t|}}{\sqrt{r}}\rd r+\mathcal{O}(1).
\end{align*}
Note that the errors made by replacing sums by integrals have been absorbed in the $\mathcal{O}(1)$ term.
Splitting the first integral into a contribution from $\{r>k_0/2\}$ and its complement and changing variables $r\to t^2r$ in the last integral, we finally obtain the estimate \eqref{Claim bound for sum over k}.
\end{proof}

\begin{proof}[Proof of Theorem \ref{thm LAP}]
1. Assume first that $V=0$. By duality and by density of $C_0^{\infty}(\R^d)$ in $\mathcal{X}_q$, it suffices to prove that
\begin{align}\label{duality XqXq*}
\sup_{\stackrel{f,g\in  C_0^{\infty}(\R^d)}{\|f\|_{\mathcal{X}_q}=\|g\|_{\mathcal{X}_q}=1}}|\langle R_0(\lambda+\I\epsilon)f,g\rangle|\leq C
\end{align}
for all $\lambda\in J$ and $\epsilon\neq 0$, with a constant $C$ independent of $\lambda$ and $\epsilon$. 
Hence, let $f,g\in C_0^{\infty}(\R^d)$.
By H\"older's inequality and Lemma \ref{lemma resolvent fixed xd}, we have the estimate 
\begin{align*}
&|\langle R_0(\lambda+\I\epsilon)f,g\rangle|
=\left|\int_{-\infty}^{\infty}\langle R_0(\lambda+\I\epsilon)f(\cdot,x_d),g(\cdot,x_d)\rangle_{L^2_{x^{\perp}}}\rd x_d\right|\\
&\leq \int_{-\infty}^{\infty}
\| R_0(\lambda+\I\epsilon)f(\cdot,x_d)\|_{L^{q}_{x^{\perp}}}
\|g(\cdot,x_d)\|_{L^{q'}_{x^{\perp}}}\rd x_d\\
&\lesssim_J \int_{-\infty}^{\infty}\int_{-\infty}^{\infty}\left\{|x_d-y_d|^{-1-2\rho(q)}+1
\right\}\|f(\cdot,y_d)\|_{L^{q'}_{x^{\perp}}}\|g(\cdot,x_d)\|_{L^{q'}_{x^{\perp}}}\rd y_d\rd x_d.
\end{align*}
By the one-dimensional Hardy-Littlewood-Sobolev inequality,
\begin{align*}
\int_{-\infty}^{\infty}\int_{-\infty}^{\infty}|x_d-y_d|^{-1-2\rho(q)}\|f(\cdot,y_d)\|_{L^{q'}_{x^{\perp}}}\|g(\cdot,x_d)\|_{L^{q'}_{x^{\perp}}}\rd y_d\rd x_d\lesssim 
\|f\|_{L^p_{x_d}L^{q'}_{x^{\perp}}}
\|g\|_{L^p_{x_d}L^{q'}_{x^{\perp}}},
\end{align*}
where $p=\frac{2}{1-2\rho(q)}$,
and we observe that\footnote{The minimum of $\rho(q)$ is attained at $q=\frac{2(d+1)}{d-1}$.}
\begin{align*}
q\in (2,2d/(d-2))\implies -\frac{1}{d+1}\leq \rho(q)< 0\implies -2\rho(q)\in (0,1).
\end{align*}
To finish the proof, we observe the trivial identity
\begin{align*}
\int_{-\infty}^{\infty}\int_{-\infty}^{\infty}\|f(\cdot,y_d)\|_{L^{q'}_{x_{\perp}}}\|g(\cdot,x_d)\|_{L^{q'}_{x_{\perp}}}\rd y_d\rd x_d= \|f\|_{L^1_{x_d}L^{q'}_{x_{\perp}}}
\|g\|_{L^1_{x_d}L^{q'}_{x_{\perp}}}.
\end{align*}
We conclude that
\begin{align*}
|\langle R_0(z)f,g\rangle|\lesssim_{J} \|f\|_{\mathcal{X}_q}
\|g\|_{\mathcal{X}_q},
\end{align*}
and \eqref{duality XqXq*} follows.

2. Assume now $V\neq 0$. By H\"older's inequality, we have
\begin{align*}
\||V|^{1/2}R_0(\lambda+\I\epsilon)V^{1/2}\|\leq \|V\|_{\mathcal{V}_q}\|R_0(\lambda+\I\epsilon)\|_{\mathcal{X}_q\to\mathcal{X}_q^*}.
\end{align*}
Hence, by the first part of the proof, 
\begin{align*}
\||V|^{1/2}R_0(\lambda+\I\epsilon)V^{1/2}\|<1
\end{align*}
whenever $\|V\|_{\mathcal{V}_q}$ is sufficiently small. Using the resolvent identity
\begin{align*}
&W_1R(z)W_2=W_1R_0(\lambda+\I\epsilon)W_2\\
&-W_1R_0(\lambda+\I\epsilon)V^{1/2}(1-|V|^{1/2}R_0(\lambda+\I\epsilon)V^{1/2})^{-1}|V|^{1/2}R_0(\lambda+\I\epsilon)W_2,
\end{align*}
and a geometric series argument, we conclude that
\begin{align*}
\sup_{\stackrel{\lambda\in J}{\epsilon\neq 0}}\|W_1R(\lambda+\I\epsilon)W_2\|\lesssim_J \frac{1+\|V\|_{\mathcal{V}_q}}{1-\|V\|_{\mathcal{V}_q}}\|W_1\|_{\mathcal{V}_q}\|W_2\|_{\mathcal{V}_q}.
\end{align*}
By duality, this implies \eqref{LAP small perturbation}.
\end{proof}

\subsection{Unique continuation}

Recall the definition of the weak unique continuation property (w.u.c.p.): A partial differential operator $P(x,D)$ is said to have the w.u.c.p. if the following holds. Let $\Omega\subset\R^d$ be open and connected, and assume that $P(x,D)u=0$ in $\Omega$ where $u$ is compactly supported in $\Omega$. Then $u\equiv 0$ in $\Omega$. 

\begin{theorem}\label{thm unique continuation}
Let $d\geq 3$, $d\in 2\N+1$. Assume that $V\in L^{d/2}(\R^d)$. Then $H_0+V$ has the w.u.c.p.
\end{theorem}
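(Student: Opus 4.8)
The plan is to reduce the weak unique continuation property to a Carleman-type estimate, exactly of the kind that the limiting absorption principle in Theorem \ref{thm LAP} is built to supply. First I would observe that $H_0+V$ is a lower-order perturbation of the model operator $H_0$ in \eqref{eq. unperturbed Landau Hamiltonian odd d}, whose principal part is the (flat) Laplacian $-\Delta$ on $\R^d$; the magnetic vector potential in $H_0$ is a first-order term with polynomially growing, but locally bounded, coefficients. Since w.u.c.p.\ is a local statement (only compactly supported $u$ on a bounded $\Omega$ enter), the growth of the coefficients is harmless, and the relevant quantitative ingredient is a uniform $L^{q'}\to L^q$ Carleman inequality for $-\Delta$ with weight $e^{\tau\varphi}$, $\varphi$ a suitable (say, radial logarithmic or linear) weight, on the Sobolev scale dictated by $d/2$. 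Concretely, one wants
\[
\|e^{\tau\varphi}w\|_{L^{\frac{2d}{d-2}}(\R^d)}\lesssim \|e^{\tau\varphi}(-\Delta)w\|_{L^{\frac{2d}{d+2}}(\R^d)}
\]
with a constant independent of $\tau$, for $w\in C_0^\infty$; this is the Jerison--Kenig inequality. I would either invoke it directly from the literature or, to keep the note self-contained, derive the conjugated-operator resolvent bound it rests on from the same Koch--Ricci/Koch--Tataru-type dispersive estimates already used in Proposition \ref{proposition dispersive resolvent bound} and Lemma \ref{lemma resolvent fixed xd}: conjugation by $e^{\tau\varphi}$ turns $-\Delta$ into an operator whose symbol, after rescaling, is a perturbation of the Helmholtz symbol $|\xi|^2-\mu$, and the LAP machinery of the previous subsection controls exactly such resolvents uniformly in the spectral parameter, hence uniformly in $\tau$.

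The main steps, in order, are: (i) localize, i.e.\ it suffices to treat $u\in C_0^\infty(\Omega)$ solving $(-\Delta + a\cdot D + V)u=0$ with $a$ smooth and $V\in L^{d/2}$ on a fixed ball; (ii) establish the uniform Carleman estimate above for $-\Delta$ with a convex weight; (iii) absorb the first-order term $a\cdot D$ by upgrading the Carleman inequality to one that also controls $\|e^{\tau\varphi}\nabla w\|$ in an appropriate $L^{q'}$-type norm — here one pays a factor that is $o(\tau)$ relative to the gain, so the term can be moved to the left-hand side; (iv) absorb the zeroth-order potential by Hölder: since $V\in L^{d/2}$, the product $\|e^{\tau\varphi}Vu\|_{L^{\frac{2d}{d+2}}} \le \|V\|_{L^{d/2}}\|e^{\tau\varphi}u\|_{L^{\frac{2d}{d-2}}}$, and on a small ball $\|V\|_{L^{d/2}(B)}$ is as small as we like by absolute continuity of the integral, so this term too is swallowed by the left side; (v) conclude in the standard way — if $u$ vanishes near a boundary point of its support, plugging the inequality with $\tau\to\infty$ forces $u\equiv 0$ near that point, and a connectedness argument propagates the vanishing through $\Omega$.

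I expect the main obstacle to be step (ii)–(iii): getting the Carleman estimate with a constant genuinely \emph{uniform} in the large parameter $\tau$, in the scale-invariant $L^{2d/(d+2)}\to L^{2d/(d-2)}$ form, and with enough room to absorb the magnetic first-order term. The subtlety is that after conjugation the operator is non-self-adjoint and its symbol degenerates on a sphere (the classical locus $|\xi+i\tau\nabla\varphi|^2=0$); controlling the resolvent there is precisely the same difficulty as the limiting absorption principle near a threshold, which is why I would route the argument through Theorem \ref{thm LAP} (or its underlying Helmholtz resolvent estimate) rather than prove a Carleman inequality from scratch. Once that uniform estimate is in hand, steps (iv) and (v) are routine. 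A clean alternative, if one prefers to cite rather than reprove, is simply to note that $V\in L^{d/2}_{\mathrm{loc}}$ together with smooth first-order coefficients places $H_0+V$ in the class covered by Jerison--Kenig's unique continuation theorem for $-\Delta+V$ with $V\in L^{d/2}_{\mathrm{loc}}$, the magnetic term being harmless by the gradient Carleman refinement of Koch--Tataru or Wolff; I would present the proof as a reduction to that result.
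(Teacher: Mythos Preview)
Your approach is viable, but it differs from the paper's in a structural way that makes your declared main obstacle---step (iii), absorbing the magnetic first-order term---entirely avoidable.

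The paper does not treat $H_0$ as a perturbation of the flat Laplacian. Instead it proves a Carleman estimate directly for $H_0$ (Theorem~\ref{thm Carleman}), with linear weight $\e^{\tau x_d}$ in the distinguished direction. Since this weight commutes with $H_{0,\perp}$, conjugation gives
\[
\e^{\tau x_d}H_0\e^{-\tau x_d}=H_{0,\perp}+D_{x_d}^2+2\I\tau D_{x_d}-\tau^2,
\]
and the transverse magnetic structure is left intact. One then expands in the eigenprojections $P_k$ of $H_{0,\perp}$ and applies the Koch--Ricci spectral projection bounds at the endpoint exponent $q=2d/(d-2)$, exactly mirroring Jerison's proof of the Jerison--Kenig inequality, with $H_{0,\perp}$ in the role of the spherical Laplacian and Koch--Ricci in place of Sogge's cluster estimates. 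The magnetic first-order term never has to be absorbed: it lives inside $H_{0,\perp}$ and is handled by those projection bounds. Your route, by contrast, needs a gradient Carleman refinement in the $L^{2d/(d+2)}\to L^{2d/(d-2)}$ scale to control $\e^{\tau\varphi}\nabla u$---a genuinely nontrivial extra ingredient that you rightly flag as the crux.

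Your suggestion to route the Carleman estimate through Theorem~\ref{thm LAP} is also off: that result concerns boundary values of $R_0$ at real energies away from thresholds, whereas the conjugated operator above carries an effective spectral parameter with large imaginary part (from the $2\I\tau D_{x_d}$ cross term), a different and in fact easier regime. The common ingredient is the Koch--Ricci bound, not the LAP itself. Your fallback---citing Jerison--Kenig together with a Koch--Tataru or Wolff gradient result to swallow the magnetic term---would give a correct proof, but it outsources precisely the estimate the paper presents as new.
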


The proof is a standard application (see e.g. \cite{MR1013816}) of the following new Carleman estimate.

\begin{theorem}\label{thm Carleman}
Let $d\geq 3$, $d\in 2\N+1$, and let $I\subset \R$ be a compact interval. There exists a constant $C_I>0$ such that for any $u\in C_c^{\infty}(\R^{2n}\times I)$ and $\tau\in\R$, with $\dist(\tau^2,2\N+n)\geq 1/2$, we have the estimate
\begin{align}\label{eq Carleman estimate}
\|\e^{\tau x_d}u\|_{L^{\frac{2d}{d-2}}(\R^d)}\leq C_I \|\e^{\tau x_d}H_0u\|_{L^{\frac{2d}{d+2}}(\R^d)}.
\end{align}
\end{theorem}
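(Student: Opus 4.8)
The plan is to prove the Carleman estimate \eqref{eq Carleman estimate} by conjugating $H_0$ with the exponential weight and analyzing the resulting operator through a partial Fourier decomposition in the Landau levels. Write $x=(x^\perp,x_d)\in\R^{2n}\times\R$ and conjugate: $\e^{\tau x_d}H_0\e^{-\tau x_d}=H_{0,\perp}-(\partial_{x_d}+\tau)^2=H_{0,\perp}-\partial_{x_d}^2-2\tau\partial_{x_d}-\tau^2$. Since $u$ is compactly supported in $x_d\in I$, I can instead work on a torus $I\cong\R/(L\Z)$ after translating $I$ into $[0,L)$ (standard reduction, cf.\ \cite{MR1013816}), expand in Fourier modes $x_d\mapsto\e^{2\pi\I\xi x_d/L}$ with $\xi\in\Z$, and simultaneously decompose via the Landau projections $P_k$. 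On the joint eigenspace indexed by $(k,\xi)$ the conjugated operator acts as multiplication by the symbol
\begin{align*}
\sigma(k,\xi)=\lambda_k+\left(\tfrac{2\pi\xi}{L}-\I\tau\right)^2-\tau^2=\lambda_k-\left(\tfrac{2\pi\xi}{L}\right)^2-\tau^2-2\I\tau\tfrac{2\pi\xi}{L}.
\end{align*}
The hypothesis $\dist(\tau^2,2\N+n)\geq 1/2$ is exactly what keeps this symbol bounded away from zero in a quantitative way: when $\xi=0$ one has $|\sigma(k,0)|=|\lambda_k-\tau^2|\geq 1/2$, and when $\xi\neq 0$ the imaginary part $2\tau\cdot 2\pi\xi/L$ together with the real part give a lower bound of the form $|\sigma(k,\xi)|\gtrsim (1+|\xi|+\lambda_k)^{-N}$ times the right uniform constant — the point being the symbol never vanishes and its reciprocal is controlled.

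The core of the argument is then a uniform-in-$\tau$ bound for the inverse operator $G_\tau:=(H_{0,\perp}-(\partial_{x_d}+\tau)^2)^{-1}$ from $L^{\frac{2d}{d+2}}$ to $L^{\frac{2d}{d-2}}$, which is the conjugated version of \eqref{eq Carleman estimate}. Here I would reuse the machinery already developed in the paper: freeze the $x_d$-variable and treat $G_\tau$ as a convolution in $x_d$ with operator-valued kernel, exactly as in the proof of Lemma~\ref{lemma resolvent fixed xd}, but now with the one-dimensional resolvent $(-\partial_{x_d}^2-2\tau\partial_{x_d}-\mu)^{-1}$ in place of $(-\partial_{x_d}^2-\mu)^{-1}$. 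Its kernel is an exponential $\e^{-\tau|x_d-y_d|}\e^{\I\sqrt{\mu+\tau^2}|x_d-y_d|}/(2\I\sqrt{\mu+\tau^2})$ with $\mu=\lambda_k-\tau^2$ replaced appropriately; combined with the Koch–Ricci spectral projection bound \eqref{Koch Ricci} (in the endpoint form $\|P_k\|_{L^{q'}\to L^q}\lesssim\lambda_k^{\rho(q)}$ at $q=2d/(d-2)$, where $\rho(q)=(d-2)/2-d/q$) and summed over $k$ as in \eqref{Claim bound for sum over k}, this produces a kernel bound of Hardy–Littlewood–Sobolev type in $x_d$. One then applies the one-dimensional HLS inequality in the $x_d$ variable against the mixed-norm spaces, just as in part~1 of the proof of Theorem~\ref{thm LAP}, to close the estimate with a constant $C_I$ depending only on $|I|$. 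The uniformity in $\tau$ comes from the fact that the exponential factor $\e^{-\tau|x_d-y_d|}\leq 1$ only helps, and the remaining kernel is dominated by its $\tau=0$ analogue.

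The main obstacle I anticipate is the behavior of the symbol $\sigma(k,\xi)$ near its zero set uniformly in the large parameter $\tau$: unlike the spectral-cluster setting where $z$ is confined to $\Lambda_{k_0}$, here $\tau^2$ ranges over all of $\R$ (away from $2\N+n$), so the "dangerous" Landau level $k_0\approx\tau^2/2$ moves to infinity and one must track the $\lambda_{k_0}$-dependent constants carefully — this is where the gain from the endpoint exponent $q=2d/(d-2)$, for which $\rho(q)\geq 0$ may fail to help, has to be balanced against the decay $\delta^{-1/2}\leq\sqrt 2$ furnished by the hypothesis $\dist(\tau^2,2\N+n)\geq 1/2$. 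A secondary technical point is justifying the periodization/Fourier-series reduction and the passage back to $\R$ for the compactly supported $u$, but this is routine and I would simply cite \cite{MR1013816}. Once the conjugated estimate $\|G_\tau f\|_{L^{2d/(d-2)}}\leq C_I\|f\|_{L^{2d/(d+2)}}$ is in hand, applying it to $f=\e^{\tau x_d}H_0u$ and using $G_\tau f=\e^{\tau x_d}u$ gives \eqref{eq Carleman estimate} directly, and Theorem~\ref{thm unique continuation} then follows by the standard Carleman-to-unique-continuation argument of \cite{MR1013816}.
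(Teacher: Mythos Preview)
Your overall architecture --- conjugate by $\e^{\tau x_d}$, invert via the Landau-level expansion together with a one-dimensional Green's function in $x_d$, apply the Koch--Ricci spectral projection bounds, sum over $k$, then use Hardy--Littlewood--Sobolev in $x_d$ --- is exactly the paper's route. But the step that carries the whole argument is handled incorrectly, and this is a genuine gap, not a detail.

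Your formula for the one-dimensional kernel is wrong, and more importantly the mechanism you propose for $\tau$-uniformity is backwards. The inverse of $D_{x_d}^2+2\I\tau D_{x_d}-\tau^2+\lambda_k$ has a kernel $m_\tau(t,\lambda_k)$ whose decisive property (this is \cite[Lemma~2.3]{MR3005546}, which the paper simply quotes) is exponential decay with a $k$-\emph{dependent} rate, schematically
\[
|m_\tau(t,\lambda_k)|\lesssim \lambda_k^{-1}\,\e^{-|\tau-\lambda_k|\,|t|}.
\]
It is precisely this $k$-dependent rate that makes the sum $\sum_k \lambda_k^{-2/d}|m_\tau(t,\lambda_k)|$ behave, after a change of variables, like $\int_0^\infty \lambda^{-2/d}\e^{-|\tau-\lambda|\,|t|}\rd\lambda\lesssim 1+|t|^{2/d-1}$ \emph{uniformly in $\tau$}, by translation invariance of the integral. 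Your kernel $\e^{-\tau|t|}\e^{\I\sqrt{\lambda_k}|t|}/(2\I\sqrt{\lambda_k})$ has a $k$-independent exponential rate; if, as you suggest, one discards $\e^{-\tau|t|}\leq 1$ and bounds by the ``$\tau=0$ analogue'', one is left with the purely oscillatory sum treated in Lemma~\rref{lemma resolvent fixed xd}, whose output carries the factor $k_0^{\rho(q)}k_0^{1/2}$ and is \emph{not} uniform as the relevant level $k_0\approx\tau^2$ goes to infinity. The difficulty you correctly flag in your last paragraph is therefore real, and it is not resolved by the balance with $\delta^{-1/2}\le\sqrt{2}$ you propose; it is resolved by getting the $k$-dependent exponential decay in $m_\tau$ right. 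The torus/periodization reduction you sketch is also unnecessary here --- the paper works directly on $\R$ via the Fourier integral defining $m_\tau$.
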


\begin{proof}[Proof of Theorem \ref{thm Carleman}]
We follow the procedure of Jerison's \cite{MR865834} proof of the unique continuation theorem of Jerison and Kenig \cite{MR794370}. The proof is similar to \cite[Theorem 1.2]{MR3005546}, except that we use the spectral projection estimates of Koch and Ricci \cite{MR2314091} for the twisted Laplacian \eqref{eq. unperturbed Landau Hamiltonian even d} instead of the spectral cluster estimates of Sogge \cite{Sogge1988}. We recall (a special case of) the main result in \cite{MR2314091}:
\begin{align}\label{Koch Ricci for Carleman}
\|P_k u\|_{L^{\frac{2d}{d-2}}(\R^{2n})}&\lesssim \lambda_k^{-\frac{1}{d}}\|u\|_{L^2(\R^{2n})},\\
\|P_k u\|_{L^2(\R^{2n})}&\lesssim \lambda_k^{-\frac{1}{d}}\|u\|_{L^{\frac{2d}{d+2}}(\R^{2n})}.
\end{align}
Adopting the notation of \cite{MR3005546}, we denote by $G_{\tau}$ the inverse of the conjugated operator  
\begin{align*}
\e^{\tau x_d}H_0\e^{-\tau x_d}=D_{x_d}^2+2\I\tau D_{x_d}-\tau^2+H_{0,\perp}.
\end{align*}
Using the eigenfunction expansion of $H_{0,\perp}$, we obtain 
\begin{align*}
G_{\tau}f(x_{\perp},x_d)=\sum_{k=0}^{\infty}\int_{-\infty}^{\infty}m_{\tau}(x_d-y_d,\lambda_k)P_kf(x_{\perp},y_d)\rd y_d
\end{align*}
where 
\begin{align*}
m_{\tau}(x_d-y_d,\lambda_k)=\frac{1}{2\pi}\int_{-\infty}^{\infty}\frac{\e^{\I(x_d-y_d)\eta}}{\eta^2+2\I\tau\eta-\tau^2+\lambda_k^2}\rd \eta.
\end{align*}
Using the spectral projection estimates \eqref{Koch Ricci for Carleman}
and proceeding as in the proof of \cite{MR3005546}, we arrive at
\begin{align*}
\|G_{\tau}f(\cdot,x_d)\|_{L^{\frac{2d}{d-2}}(\R^{2n})}\lesssim 
\sum_{k=0}^{\infty}(1+2k)^{-\frac{1}{d}}\int_{-\infty}^{\infty}|m_{\tau}(x_d-y_d,\lambda_k)|\|f(\cdot,y_d)\|_{L^{\frac{2d}{d+2}}(\R^{2n})}\rd y_d.
\end{align*}
By the straightforward estimate 
\begin{align*}
|m_{\tau}(x_d-y_d,\lambda_k)|\leq \frac{\e^{-|\tau-\lambda_k||x_d-y_d|}}{\lambda_k},
\end{align*}
see Lemma 2.3 in \cite{MR3005546}, one can sum up the previous estimates (estimate the sum by an integral and change variables $k\to \lambda=\sqrt{2k+n}$):
\begin{align*}
\sum_{k=0}^{\infty}(1+2k)^{-\frac{1}{d}}|m_{\tau}(x_d-y_d,\lambda_k)|\lesssim \int_0^{\infty}
\lambda^{-\frac{2}{d}}\e^{-|\tau-\lambda||x_d-y_d|}\rd\lambda\lesssim 1+|x_d-y_d|^{\frac{2}{d}-1}.
\end{align*}
Thus
\begin{align*}
\|G_{\tau}f(\cdot,x_d)\|_{L^{\frac{2d}{d-2}}(\R^{2n})}
&\lesssim|I|^{\frac{1}{2}-\frac{1}{d}}\|f\|_{L^{\frac{2d}{d+2}}(\R^{d})}\\
&+\int_{-\infty}^{\infty}|x_d-y_d|^{\frac{2}{d}-1}\|f(\cdot,y_d)\|_{L^{\frac{2d}{d+2}}(\R^{2n})}\rd y_d.
\end{align*}
An application of the one-dimensional Hardy-Littlewood-Sobolev inequality yields
\begin{align*}
\|G_{\tau}f\|_{L^{\frac{2d}{d-2}}(\R^{d})}\lesssim \|f\|_{L^{\frac{2d}{d+2}}(\R^{d})}.
\end{align*}
This completes the proof.
\end{proof}


\bibliographystyle{plain}
\bibliography{C:/Users/Jean-Claude/Desktop/papers/bibliography_masterfile}


\end{document}